
\documentclass[abstracton]{scrartcl} 


\usepackage{amsfonts}
\usepackage{amsmath}
\usepackage{amssymb}
\usepackage{xcolor}
\usepackage{dsfont}
\usepackage{epsf}
\usepackage{epstopdf}
\usepackage{graphics}
\usepackage{graphicx}
\usepackage{latexsym}
\usepackage{oldgerm}
\usepackage{dsfont}
\usepackage{amscd, amsthm,enumerate,verbatim,calc}
\usepackage{authblk}



%
%
\setlength{\parskip}{2pt plus1pt minus1pt}

\newcommand{\Aut}{\ensuremath{\operatorname{Aut}}}

\newtheorem{theorem}{Theorem}
\newtheorem{lemma}[theorem]{Lemma}

\newtheorem{problem}[theorem]{Problem}

\newcommand{\w}{\color{black}}
\newcommand{\s}{\color{black}}
\newcommand{\f}{\color{black}}

\usepackage[colorlinks,breaklinks,backref]{hyperref}
\usepackage{hyperref}
\usepackage{backref}

\begin{document}

\title
{Asymmetrizing infinite trees\\
\bigskip
}

\author[1]{Wilfried Imrich}
\author[2]{Rafa{\l}  Kalinowski}
\author[3]{Florian Lehner}
\author[2]{Monika Pil\'sniak}
\author[2]{Marcin Stawiski}
\affil[1]{\normalsize Montanuniversit\"at Leoben, A-8700 Leoben, Austria}
\affil[2]{\normalsize AGH University of Science and Technology, 30-059 Krakow, Poland}
\affil[3]{\normalsize University of Auckland, Auckland, New Zealand}

\date{\today}
\maketitle
\begin{abstract}
A graph $G$ is asymmetrizable if it has a set of vertices {\f whose setwise stablizer only consists of the identity automorphism. The}
motion $m$ of a graph  is the minimum number of vertices moved by any non-identity automorphism.  It is known that infinite trees $T$ with motion $m=\aleph_0$ are asymmetrizable if the vertex-degrees are bounded by $2^m.$ We show that this  also holds for arbitrary, infinite $m$, and that the number of inequivalent asymmetrizing sets is $2^{|T|}$.
\end{abstract}

\noindent
{\w Keywords: Automorphisms, infinite graphs, asymmetrizing and distinguishing number.\\               
Math. Subj. Class.: {05C05, 05C15, 05C25, 05C63.} }                      

\section{Introduction}\label{sec:intro}
{\f Given a graph $G$, a set $S \subseteq V(G)$ is called \emph{asymmetrizing} if the identity is only element of $\Aut G$ which setwise fixes $S$. A graph is called \emph{asymmetrizable}, if it has an asymmetrizing set.}

{\f Motivated by the use of asymmetric graphs in the construction of graphs with given automorphism group, asymmetrization was perhaps first studied in a 1977 paper by Babai \cite{ba-1977}, where he
proved that every tree $T$ in which all vertices have the same (finite or infinite) degree is asymmetrizable.}
{\f This result was later {\w reproved} and generalised by Polat and Sabidussi \cite{po-1991,posa-1991}.}

{\f Much recent work on asymmetrization was motivated by a popular paper by Albertson and Collins \cite{ac-1996}, and a lot of it has focussed on the connection between asymmetrization and}
the concept of \emph{motion} of a graph, which is {\f defined as} the minimum number of vertices moved by any non-identity automorphism.
This 
{\f connection} was already noted by Cameron, Neumann  and Saxl in \cite{canesa-1984}, where they 
{\f studied asymmetrizing sets for} permutation groups. For graphs the most notable recent result linking motion to asymmetrization  is due to Babai \cite{ba-2021}: he proved  that each connected, locally finite graph with infinite motion {\f is asymmetrizable}
thereby verifying the Infinite Motion Conjecture of Tucker \cite{tu-2011}.

The primary motivation for this paper is \cite[Question 4]{imtu-2020}, which asks whether each tree of motion $m>\aleph_0$ is asymmetrizable if its degrees are bounded by $2^m$.  
Our main theorem, Theorem \ref{thm:mainmain},  answers it affirmatively.
\begin{theorem}\label{thm:mainmain}
 Let $m$ be an infinite cardinal and $T$ be a  tree whose degrees are bounded by $2^m$. If the minimum number of vertices moved by each non-trivial automorphism of $T$ is  $m$,  then $T$ is asymmetrizable and the number of inequivalent asymmetrizing sets is $2^{|T|}$.
\end{theorem}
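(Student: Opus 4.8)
The plan is to build an asymmetrizing set by a transfinite greedy construction that, one automorphism at a time, destroys every non-trivial symmetry while preserving enough freedom to keep going. First I would fix a root (or a central vertex/edge, exploiting that a tree has a fixed vertex or fixed edge under its automorphism group when the motion is infinite) and use it to linearly order the vertices by their distance from the root, refining into a well-ordering of $V(T)$ of order type $|T|$. The idea is that deciding, for a vertex $v$, whether $v\in S$ should ``break'' any automorphism that has so far been left alive.

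\medskip

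\noindent
The key combinatorial input is the degree bound. Since $T$ has motion $m$ and degrees at most $2^m$, the number of automorphisms of $T$ is at most $2^m$: an automorphism is determined by its action on the neighbours of a fixed vertex together with a bounded amount of recursive branching data, so $|\Aut T|\le (2^m)^{\aleph_0}=2^m$ when $m$ is infinite. This is the crucial counting fact, because it lets me index the non-trivial automorphisms as $\{\varphi_\alpha : \alpha<2^m\}$ and handle them in a transfinite process of length $2^m$. For each $\varphi_\alpha$, infinite motion guarantees a support of size $m$, so there is an abundance of vertices I can use to separate $\varphi_\alpha$ from the identity. The construction would maintain, at each stage, a partial specification of $S$ (a pair of disjoint ``in'' and ``out'' sets) of size strictly less than $2^m$, and at step $\alpha$ I would choose a vertex in the support of $\varphi_\alpha$ that is still unspecified and place it into exactly one of $S$ or its complement in such a way that $\varphi_\alpha$ can no longer setwise fix $S$. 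Because only fewer than $2^m$ vertices are frozen so far and the support has size $m$, but crucially the orbit of any single vertex under $\varphi_\alpha$ meets $S$ and its complement controllably, such a choice exists.

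\medskip

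\noindent
The hard part, and the place where I expect to spend the most care, is ensuring that a choice made to kill $\varphi_\alpha$ is not silently undone by the freedom still needed for later automorphisms, and conversely that later choices do not accidentally \emph{revive} $\varphi_\alpha$. The clean way to handle this is to make the decision for $\varphi_\alpha$ not on a single vertex but on a pair $v,\varphi_\alpha(v)$ with $v$ in the support: put exactly one of them into $S$, so that $\varphi_\alpha$ maps an element of $S$ to an element of its complement and hence cannot fix $S$ setwise, and this commitment is permanent and cannot be reversed by any later stage since future stages only specify \emph{other}, still-unspecified vertices. The delicate bookkeeping is verifying that at stage $\alpha$ one can always find a support vertex $v$ with both $v$ and $\varphi_\alpha(v)$ still unspecified; this follows because fewer than $2^m\le$ (support size counted with multiplicity) vertices are frozen, but it must be checked that the support of size $m$ is not entirely consumed — here the infinite motion $m$ and the fact that we have performed fewer than $2^m$ steps, each freezing boundedly many vertices, must be balanced against the arithmetic of $m$ versus $2^m$, which is the subtle cardinal-arithmetic point of the argument.

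\medskip

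\noindent
Finally, for the count of inequivalent asymmetrizing sets, I would note that once an asymmetrizing set $S$ is produced, the construction actually affords $2^{|T|}$ many free binary choices: at all but a bounded set of stages there remain unconstrained vertices whose membership can be toggled independently without creating a new symmetry, since toggling a vertex outside every relevant support pair cannot make any $\varphi_\alpha$ fix $S$. Two such sets are inequivalent (not related by an automorphism) because distinct toggling patterns on an independent set of $2^{|T|}$ vertices cannot all be identified under the at most $2^m\le 2^{|T|}$ automorphisms; a counting argument then yields exactly $2^{|T|}$ equivalence classes, matching the upper bound $2^{|T|}=|\mathcal{P}(V(T))|$ on the total number of subsets. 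I expect this last step to be routine once the main transfinite construction is in place.
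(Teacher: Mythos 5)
There is a genuine gap, and it is fatal to the whole strategy. Your key counting fact --- that $\Delta(T)\le 2^m$ and motion $m$ force $|\Aut T|\le 2^m$ --- is false. Consider the tree obtained by joining the roots of exactly $2^m$ pairwise isomorphic asymmetric rooted trees of order $m$ to a common vertex. Its degrees are bounded by $2^m$, and its motion is exactly $m$: any non-identity automorphism must permute the branches nontrivially (each branch is asymmetric as a rooted tree), hence moves at least $2m=m$ vertices. So this tree satisfies the hypotheses of the theorem, yet its automorphism group contains $\Sym(2^m)$ acting on the branches, giving $|\Aut T|=2^{2^m}>2^m$. Your bound $(2^m)^{\aleph_0}=2^m$ miscounts how tree automorphisms arise: an automorphism fixing a vertex is determined by compatible permutations of isomorphic subtrees at \emph{every} level, and already at the first level of this example there are $2^{2^m}$ choices. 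Worse, the degree bound together with connectedness forces $|T|\le\aleph_0\cdot 2^m=2^m$, so in this example you have $2^{2^m}$ automorphisms to kill but only $2^m$ vertices. A transfinite process that permanently freezes a fresh pair $\{v,\varphi_\alpha(v)\}$ for each automorphism cannot even be indexed, let alone executed: the supports of distinct automorphisms must overlap massively, and after the first $m$ stages an entire support of size exactly $m$ can be consumed. This is precisely the ``subtle cardinal-arithmetic point'' you deferred; it is not bookkeeping but the place where the approach collapses.

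The paper avoids enumerating automorphisms altogether. It works with the asymmetrizing number $a(x)$ of rooted subtrees and the Polat--Sabidussi product formula $a(y)=\prod_{x\in R_y}\binom{a(x)}{\tau(x)}$, which says that a similarity class of $\tau(x)$ mutually isomorphic branches can be asymmetrized compatibly exactly when there are at least $\tau(x)$ pairwise inequivalent asymmetrizing sets to distribute among them --- one stroke handles a whole class of up to $2^m$ isomorphic branches, and this is also what detects the sharpness of the bound (with more than $2^m$ branches the binomial coefficient is $0$). The proof is then organized by end structure: rayless trees by transfinite induction on Schmidt's rank, one-ended trees and trees containing double rays by Polat's theorems. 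Any correct argument must, at least implicitly, compare $\tau(x)$ with the supply of inequivalent asymmetrizing sets of subtrees; a per-automorphism greedy construction is structurally unable to make that comparison.
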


The bound $2^m$ is sharp, because the tree consisting of  more than $2^m$ rooted isomorphic asymmetric trees of order $m$ whose roots are connected to a common vertex  is not asymmetrizable.
Note that Theorem \ref{thm:mainmain} generalizes Babai's result from \cite{ba-1977},  because trees where all vertices have the  same infinite  degree $\alpha$  have motion $\alpha$ and thus satisfy the assumptions of the theorem. 
%
%

  In Section~\ref{sec:treelike} we apply Theorem \ref{thm:mainmain}
  to tree like-graphs. Regarding other applications we wish to   point out that the methods of this  paper can be used
to derive  results that are analogous to Theorem~\ref{thm:mainmain} for edge colorings.

Finally,
 it is worth mentioning that our methods only rely on ZFC and do not assume the Generalized Continuum Hypothesis, just as the  papers \cite{po-1991,posa-1991}, whose results we use.

\section{Preliminaries}\label{sec:prelim}
A graph $G$ is \emph{asymmetrizable} if it has {\f an \emph{asymmetrizing} set of vertices, that is,} a set $S \subseteq V(G)$ which is preserved only by the identity automorphism. If $S$ is such a set, then {\s its complement $V(G)\setminus S$} is also {\f asymmetrizing.}
The definition allows that $S$ or {\s $V(G)\setminus S$} are empty.
%
%

Two asymmetrizing sets $S$ of $G$ and $S'$ of $G'$ are called \emph{equivalent} if there exists an isomorphism $\varphi$ from $G$ to $G'$ such that $\varphi(S_1) = S_2$. 
Following \cite{posa-1991} we define the \emph{asymmetrizing number} of $G$, denoted by $a(T)$, as the number of pairwise inequivalent asymmetrizing sets. Observe that $a(G) \leq 2^{|V(G)|}$ for all graphs, and that $a(G) = 2^{|V(G)|}$ for asymmetric graphs.

Let $(T,w)$ denote the tree with root $w \in V(T)$, {\f and denote by $\Aut(T,w)$ the subgroup of $\Aut(T)$ that fixes $w$. By slight abuse of notation, we call a subset $S \subseteq V(T)$ \emph{asymmetrizing for $(T,w)$}, if the identity is the only element of $\Aut(T,w)$ which fixes $S$ setwise. Two asymmetrizing sets $S$ and $S'$ are called equivalent with respect to $\Aut(T,w)$, if there is an element of $\Aut(T,w)$ which maps $S$ to $S'$.}
We define 
$a(T,w)$ as the number of inequivalent asymmetrizing sets of $(T,w)$.




Let $(T,w)$ be a rooted {\f tree}. For vertices $x$, $y$ of $(T,w)$ we let $x \geq y$ denote the fact that $y$ lies on the unique path from the root to $x$. As usual, we say that $y$ is the parent of $x$ if $y<x$ and $xy$ is an edge. We say $x$ and $x'$ are siblings if they have the same parent, and  that $x$ and $x'$ are twins if they are siblings and if there is an automorphism which moves $x$ to $x'$ and fixes their parent. We call the set of twins of $x$ the \emph{similarity class} of $x$, denote it by $\bar{x}$, and set $\tau(x) =|\bar{x}|$. We always have $\tau(x) \geq 1$ since $x$ is a twin of itself.

For a vertex  $x$ of $(T,w)$ with parent $y$, we let $T^x$ denote the component of $T - xy$ which contains $x$. We consider $T^x$ as a rooted tree with root $x$ and write $a(T,w;x)$ for the number of inequivalent {\w asymmetrizing sets} of $T^x$. {\f If the rooted tree $(T,w)$ is clear from the context, we write $a(x)$ instead of $a(T,w;x)$; in particular, in this case we also write $a(w)$ instead of $a(T,w)$.}
Let $y$ be the parent of $x$ and $x'$.  Then clearly $x$ and $x'$ are twins if and only if $T^x$ and $T^{x'}$ {\s are isomorphic}.

Let $R_y$ be a set of representatives for the similarity classes of siblings of $y$. Then {\f by \cite[Theorem 2.3]{posa-1991}}

\begin{equation}\label{eq:1}
a(y) = \prod_{x\in R_w}{a(x) \choose \tau(x)},
\end{equation}
where ${a \choose \tau}$ denotes the usual binomial coefficient for finite $a$ and $\tau$. If $a$ is infinite, then ${a \choose \tau}$ is $a^\tau$  if $\tau\le a$, and 0 if $\tau > a$.

Equation (\ref{eq:1}) implies a helpful lemma that uses the concept of motion. {\w Recall from the introduction that} the {motion} $m(G)$ of a graph $G$  is the least number of vertices moved by a non-identity automorphism of $G$. For asymmetric graphs the motion is not defined, but we use the convention that $m(G) > \alpha$ for any asymmetric graph and any cardinal $\alpha$. In particular, a graph with fewer than $\alpha$ vertices has (by definition) motion $m(G)\geq \alpha$ if and only if it is asymmetric. This means that the order of a graph $G$ with motion $m$ is at least $m$ unless $G$ is asymmetric.

\begin{lemma}\label{le:1} Let $(T,w)$ be a rooted tree with motion $m\ge \aleph_0$, {\f all of} whose degrees {\s are bounded by $2^m$, and let} $y\in V(T)$. If $a(x) = 2^{|T^x|}$  for all {\s children $x$ of $y$}, then $a(y) = 2^{|T^y|}$.
\end{lemma}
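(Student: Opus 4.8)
The plan is to deduce the lemma from the product formula~(\ref{eq:1}), turning it into a cardinal-arithmetic computation whose only non-routine ingredient is the non-vanishing of the binomial factors. Throughout, let $R_y$ be a transversal of the similarity classes of children of $y$, so that by hypothesis
$a(y) = \prod_{x \in R_y} \binom{a(x)}{\tau(x)} = \prod_{x \in R_y} \binom{2^{|T^x|}}{\tau(x)}$,
and recall that $|T^y| = 1 + \sum_{x} |T^x|$, the sum running over all children of $y$.

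The heart of the argument, and the step I expect to be the main obstacle, is to verify $\tau(x) \le a(x) = 2^{|T^x|}$ for every $x \in R_y$, since this is exactly what guarantees that no factor $\binom{2^{|T^x|}}{\tau(x)}$ vanishes (equivalently, that $T^y$ is asymmetrizable at all). This is where both hypotheses enter. If $\tau(x) = 1$ there is nothing to check. If $\tau(x) \ge 2$, then $x$ has a twin $x'$, and the permutation of $T$ that interchanges the isomorphic rooted subtrees $T^x$ and $T^{x'}$ while fixing every other vertex is a non-identity element of $\Aut(T,w)$ moving exactly $2|T^x|$ vertices; the motion hypothesis therefore forces $2|T^x| \ge m$, and since $m$ is infinite this gives $|T^x| \ge m$. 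The degree bound then yields $\tau(x) \le \deg(y) \le 2^m \le 2^{|T^x|} = a(x)$, as desired.

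With this inequality in hand the rest is cardinal arithmetic. For each $x \in R_y$ the convention for infinite binomials gives $\binom{a(x)}{\tau(x)} = a(x)^{\tau(x)} = (2^{|T^x|})^{\tau(x)} = 2^{|T^x|\,\tau(x)}$, an identity that also holds trivially when $\tau(x)=1$ and $T^x$ is finite. Multiplying over $R_y$ and using $\prod_i 2^{\kappa_i} = 2^{\sum_i \kappa_i}$ together with $|T^x|\,\tau(x) = \sum_{x' \in \bar{x}} |T^{x'}|$, the exponent regroups as the sum $\sum_{x'}|T^{x'}|$ over all children $x'$ of $y$, which equals $|T^y|$ once the additive~$1$ for the root $y$ is absorbed. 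Hence $a(y) = 2^{|T^y|}$. The lone edge case is when $T^y$ is finite: then any non-trivial element of $\Aut(T,w)$ fixing $y$ would extend, by the identity outside $T^y$, to a non-trivial automorphism of $T$ moving only finitely many vertices, contradicting $m \ge \aleph_0$; so $T^y$ is asymmetric and $a(y) = 2^{|T^y|}$ holds directly.
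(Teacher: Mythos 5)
Your proof is correct and takes essentially the same route as the paper: both rest on Equation~(\ref{eq:1}), showing that each factor equals $2^{|T^x|\,\tau(x)}$ by using the motion hypothesis to force $|T^x| \ge m$ whenever $\tau(x) \ge 2$ (the paper states this contrapositively, namely $|T^x| < m$ implies $\tau(x)=1$), then using the degree bound to get $\tau(x) \le 2^m \le a(x)$ so the binomial coefficient becomes $a(x)^{\tau(x)}$, and finishing with the same cardinal arithmetic. Your explicit handling of the finite-$T^y$ edge case is a minor point of extra care that the paper glosses over, but it does not change the substance of the argument.
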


\begin{proof} Let $(T,w)$ {\f and $y$} satisfy the assumptions of the lemma. 
Then for all siblings $x$ of $y$, {\s the subtree}   $T^x$ has motion $m$ and is asymmetrizable.

Let $x$ be a {\f child}
of $y$ 
{\f such that} $|T^x|<m$. Then $T^x$ is asymmetric, {\f and hence} $a(x) = 2^{|T^x|}$.
{\f Moreover} $\tau(x)=1$, {\f because otherwise the motion would be less than $m$}.

Now let {\f $x$ be a child of $y$ such that} $|T^x| \ge m$. Then $a(x) \ge 2^m \ge \tau(x)$ and hence
\[{a(x)\choose \tau(x)} =(2^{|T^x|})^{\tau(x)} = 2^{|T^x|\,\tau(x)}.\]
Because $|T^x| \,  \tau(x)$ is the total size of the union of the $V(T^z)$ for $z \in \bar{x},$ we conclude that
\[{a(x)\choose \tau(x)} = 2^{|{\bigcup_{z\in \bar{x}}V(T^z)}|}.\]

Substituting into Equation (\ref{eq:1}) we obtain
\begin{align*}
    a(y) &= \prod_{x\in R_y}{a(x) \choose \tau(x)}
    = \prod_{x\in R_w}2^{|{\bigcup_{z\in \bar{x}}V(T^z)|}}\\
    &= 2^{\sum_{x\in R_w}{|{\bigcup_{z\in \bar{x}}V(T^z)|}}}
    = 2^{|\bigcup_{x\in R_w}\bigcup_{z\in\bar{x}}V(T^z)|}\\
    &=2^{|T|}. \qedhere
\end{align*}
\end{proof}

\section{Proof of the main theorem}\label{sec:leaves}
{\f In this section we prove {\w Theorem} \ref{thm:mainmain}. The proof is split into three parts depending on the infinite paths that can be found in the tree. One-sided infinite paths are called \emph{rays} and  two-sided infinite paths  \emph{double rays}. We will discern three types of trees:
\begin{enumerate}
    \item rayless trees (also called compact trees) are treated in Theorem \ref{rayless},
    \item trees containing rays but no double rays (also called one-ended trees) are treated in Theorem \ref{noncomp}, and
    \item trees containing at least one double ray are treated in Theorem \ref{thm:main}.
\end{enumerate}}
{\s For convenience, we} will let $\Delta(T)$ denote the least upper bound on the degrees of the vertices in $T$. Note that if $\Delta(G)$ is infinite, then $\Delta(G)=|G|$ for every connected graph $G$.

\subsection{Compact trees}

Our proof {\f for compact trees} uses the concept of \emph{rank}, which was introduced by Schmidt in \cite{schm-1983}  
%
{\f and can be} inductively defined as follows.
\begin{itemize}
\item Finite trees have rank 0.
\item A tree $T$ has rank $\rho$ if
\begin{enumerate}
\item $T$ has not been assigned a rank less than $\rho$, and if
\item there is a finite set of $S$ vertices such that each component of $T - S$ has rank
less than $\rho$.
\end{enumerate}
\end{itemize}
In \cite{schm-1983} it was shown that {\f every rayless graph has a rank}, and that there is a tree
of rank $\rho$ for every ordinal number $\rho$. 
We will  need the following facts, shown in  \cite{schm-1983} and \cite{po-1994}.
{\f Firstly, }there is a canonical choice for the set $S$ in the definition above, by choosing $S$  minimally among all sets that work. This minimal set is called {\s the \emph{core}} of $T$ and can be shown to be unique; in particular, it is setwise fixed by
every automorphism of $T$. 
{\f Secondly, the rank cannot go up by removing additional vertices. In other words, if $S' \subseteq V(T)$ contains the core, then every component of $T -S'$ has rank less than $\rho$.}

Note that {\f the first fact above} implies that each rayless tree has a center consisting of {\f either} a single vertex or an edge that is preserved by all automorphisms. Just consider the minimal subtree $T_S$ of $T$ containing its core $S$. {\f This tree} $T_S$ is finite, {\f it is} preserved by all {\s automorphisms} of $T$, and thus {\f so is} its center. Despite the fact that this immediately follows from \cite{schm-1983} it was first explicitly stated in \cite{posa-1994}. {\f {\w The} second fact implies that when we remove all vertices of $T_S$ from $T$, then every component has strictly smaller rank than $T$.}

{\s Now we state and prove our main result for rayless trees.}

\begin{theorem} \label{rayless}
 Let $m$ be an infinite cardinal and {\f let} $T$ be a rayless tree with {\f motion} $m$ and
$\Delta(T)\leq 2^{m}$. Then $a(T) = 2^{|T|}$.
\end{theorem}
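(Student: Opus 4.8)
The plan is to proceed by transfinite induction on the rank $\rho$ of the rayless tree $T$. The base case $\rho = 0$ means $T$ is finite; since $T$ has motion $m \geq \aleph_0$ and fewer than $m$ vertices, it is asymmetric, and for an asymmetric graph every subset is asymmetrizing with no two equivalent under a trivial automorphism group, so $a(T) = 2^{|T|}$ as recorded in the Preliminaries. For the inductive step, I would fix a tree $T$ of rank $\rho > 0$ and assume the statement holds for all rayless trees of smaller rank that satisfy the same hypotheses on motion and degree.

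The key structural tool is the core $S$ of $T$ and the associated finite subtree $T_S$ containing it. As recalled in the excerpt, $T_S$ is finite, is setwise fixed by every automorphism of $T$, and has a center that is either a single vertex or an edge preserved by all of $\Aut T$. First I would root $T$ at this center (subdividing the central edge with a fixed auxiliary vertex, or treating the edge-center case by a small modification so that the root is genuinely fixed). This reduces the problem to computing $a(T,w)$ for a rooted tree whose root is fixed by all automorphisms. Removing the vertices of $T_S$ leaves components each of strictly smaller rank by the second fact cited; each such component $T^x$, viewed as a subtree hanging off $T_S$, still has motion $m$ (since it is a subtree of $T$ and any nontrivial automorphism of it extends trivially, or its motion can only be at least that of $T$) and degrees bounded by $2^m$. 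By the induction hypothesis, $a(x) = a(T,w;x) = 2^{|T^x|}$ for every such child $x$ of a leaf of $T_S$.

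Now I would invoke Lemma \ref{le:1} to propagate the equality $a(x) = 2^{|T^x|}$ upward through the finitely many levels of $T_S$. Working from the deepest vertices of $T_S$ toward the root, at each internal vertex $y$ of $T_S$ all of its children $x$ satisfy $a(x) = 2^{|T^x|}$ — either because $x$ lies outside $T_S$ (induction hypothesis) or because $x \in T_S$ and we have already established it at a previous step of this finite upward sweep. Lemma \ref{le:1} then yields $a(y) = 2^{|T^y|}$. Since $T_S$ is finite, finitely many applications reach the root $w$ and give $a(w) = a(T,w) = 2^{|T|}$. Because the root (the center of $T_S$) is fixed by every automorphism of $T$, asymmetrizing sets for $(T,w)$ coincide with asymmetrizing sets for $T$, and equivalence under $\Aut(T,w)$ equals equivalence under $\Aut T$; hence $a(T) = a(T,w) = 2^{|T|}$.

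The main obstacle I anticipate is the case where the center of $T_S$ is an edge rather than a vertex, since then there is no single fixed root and an automorphism could potentially swap the two halves of $T$ across that edge. Handling this requires checking that such a swap is compatible with the counting: either one argues that a swap would move at least $m$ vertices and can be accounted for by pairing the two subtrees (as in the twin/similarity-class mechanism that Lemma \ref{le:1} already encodes), or one subdivides the central edge by inserting a fixed vertex and verifies that this does not change the automorphism group acting on the relevant subtrees nor the final count $2^{|T|}$. A secondary technical point is verifying that each component of $T - V(T_S)$, and the rooted subtree $T^x$, genuinely has motion $m$ rather than smaller motion, so that the induction hypothesis applies; this should follow because any automorphism of $T^x$ fixing its root extends to an automorphism of $T$ fixing $T_S$ pointwise, so it moves at least $m$ vertices within $T^x$ itself.
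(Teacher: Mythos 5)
Your proposal is correct and follows essentially the same route as the paper's proof: transfinite induction on Schmidt's rank, rooting at the center of the finite subtree $T_S$ spanned by the core, applying the induction hypothesis to the subtrees hanging below $T_S$, and sweeping upward through the finitely many levels of $T_S$ with Lemma \ref{le:1}. The only point of divergence is the central-edge case, which you flag as an obstacle and propose to handle by subdividing the edge; the paper instead roots $T$ at one endpoint $w$ of the central edge $ww'$ and observes that, since the complement of an asymmetrizing set is again asymmetrizing (and since every element of $\Aut(T,w)$ fixes both $w$ and $w'$, so their membership in a set is immaterial for asymmetrizing $(T,w)$), there are still $2^{|T|}$ inequivalent asymmetrizing sets of $(T,w)$ that contain $w$ but omit $w'$; the $\Aut(T)$-stabilizer of such a set cannot swap $w$ and $w'$, hence lies in $\Aut(T,w)$ and is trivial. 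Your subdivision device would also work after routine verification, so this is a cosmetic rather than substantive difference. Finally, your remark that the motion hypothesis should really be verified for the rooted subtrees $(T^x,x)$ --- using that a non-trivial automorphism fixing the root extends to $T$ by the identity and therefore moves at least $m$ vertices --- is exactly the right justification, and is in fact stated more carefully in your write-up than in the paper, which applies the induction hypothesis to the subtrees below $T_S$ without comment on this point.
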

\begin{proof}
We use transfinite induction on the rank of $T$. Trees of rank 0 are finite, so they  have infinite motion only if they are asymmetric, and therefore $a(T)= 2^{|T|}$ {\f for every tree of rank 0 {\w that} satisfies the conditions of the theorem}.

For the induction step, let $\rho$  be any ordinal, {\f let} $T$ be a tree of rank $\rho$, and assume that
the statement of the theorem holds for any tree with rank $\sigma < \rho$. 
%
Let $T_S$ be the minimal subtree containing the core of $T$. {\f If $T$ has a central vertex, then let $w$ be this central vertex. Otherwise, let $w$ be one endpoint of the central edge. Consider the rooted tree $(T,w)$.}

{\f We claim that $a(x) = 2^{|T^x|}$ for every vertex $x$ of $(T,w)$. For vertices not contained in $T_S$ this is true by the induction hypothesis. Now assume that there is a vertex $x$ which does not satisfy the claim and let $y$ be such a vertex at maximal distance from $w$; note that the maximal distance is finite because $T_S$ is finite. All children of $y$ satisfy the claim, and by Lemma \ref{le:1} the claim is satisfied for $y$ as well.}

{\f If $T$ has a central vertex $w$, then the statement of the theorem follows immediately from the fact that $a(w) = a(T)$. If there is a central edge $ww'$, then there are $a(w)$ asymmetrizing sets of $(T,w)$ which {\w do} not contain $w'$ because the complement of an asymmetrizing set is again asymmetrizing. Clearly any such set is asymmetrizing for $T$ because its stabiliser must fix both $w$ and $w'$.}
%
%
%
\end{proof}

\subsection{One-ended trees}

{\f We now turn to the case of one-ended trees, that is, trees containing a ray, but no double ray.} We invoke a theorem of Polat \cite{po-1991} to prove the following theorem.

\begin{theorem} \label{noncomp}
 Let $T$ be a one-ended tree, and {\f let} $m$ be an infinite cardinal. If $m(T)= m$ and
$\Delta(T)\leq 2^{m}$, then $a(T)= 2^{|T|}$.
\end{theorem}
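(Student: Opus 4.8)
The plan is to reduce the one-ended case to the rayless case of Theorem~\ref{rayless} by rooting $T$ at its unique end. Since $T$ is one-ended, its single end $\omega$ is preserved by every automorphism, so we may regard $T$ as rooted ``at $\omega$'': each vertex $v$ has a unique neighbour on the ray from $v$ to $\omega$ (its parent), and the downward subtree $T^v$ hanging below $v$ contains no ray and is therefore rayless. Fixing one ray $r_0,r_1,r_2,\dots$ converging to $\omega$, every vertex lies below some $r_n$, so $T=\bigcup_n T^{r_n}$ is an increasing union of rooted rayless trees. Polat's theorem from \cite{po-1991} is invoked here to supply the asymmetrizability of $T$ itself (and, more generally, structural control over its asymmetrizing sets for one-ended trees meeting our degree hypothesis); our remaining task is to verify the motion hypothesis and to pin down the exact number $2^{|T|}$.

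First I would show that each downward subtree $T^v$, rooted at $v$, is governed by the rayless machinery. A non-trivial \emph{root-fixing} automorphism of $T^v$ extends, by the identity outside $T^v$, to a non-trivial automorphism of $T$ moving exactly the same vertices; hence every such automorphism moves at least $m$ vertices, while $\Delta(T^v)\le\Delta(T)\le 2^m$. Running the same rank induction as in the proof of Theorem~\ref{rayless}, together with Lemma~\ref{le:1}, but keeping the root $v$ fixed throughout, then yields the rooted count $a(T,\omega;v)=2^{|T^v|}$ for every $v$; in particular each piece $T^{r_n}$ carries $2^{|T^{r_n}|}$ asymmetrizing sets for $(T^{r_n},r_n)$.

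Next I would assemble a global asymmetrizing set from compatible choices on the pieces. The elementary but decisive fact is that two rays converging to $\omega$ eventually coincide, so any $\sigma\in\Aut(T)$ fixes a tail $r_N,r_{N+1},\dots$ of the chosen ray and consequently restricts to a root-fixing automorphism of every $T^{r_n}$ with $n\ge N$. Thus, if each $S_n$ is asymmetrizing for $(T^{r_n},r_n)$ and the $S_n$ are compatible, meaning $S_{n+1}\cap V(T^{r_n})=S_n$, then $S=\bigcup_n S_n$ is asymmetrizing for $T$: a stabilising $\sigma$ restricts to the identity on each $T^{r_n}$ with $n\ge N$, and these subtrees exhaust $T$. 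Compatible extensions exist at each step because, by the product formula \eqref{eq:1} applied at $r_{n+1}$, the binomial factors $\binom{a(x)}{\tau(x)}$ are non-zero: every similarity class of children has $\tau(x)\le\Delta\le 2^m\le a(x)$, leaving room to colour the side subtrees so as to distinguish them from one another and from the already-fixed subtree $T^{r_n}$. Counting the compatible sequences then gives $\prod_n 2^{|T^{r_n}|}=2^{\sum_n|T^{r_n}|}=2^{|T|}$ assembled sets, matching the trivial bound $a(T)\le 2^{|V(T)|}$.

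The hard part will be the final step: extracting $2^{|T|}$ pairwise \emph{inequivalent} sets. Unlike the rayless case, $\Aut(T)$ need not fix any vertex—it fixes only the end $\omega$—so inequivalence has to be controlled across the infinite ascent toward $\omega$ rather than at a single fixed root. Concretely, one must rule out that an automorphism fixing a tail of the ray, yet acting non-trivially on infinitely many side subtrees of $r_{N+1},r_{N+2},\dots$, identifies too many of the assembled sets. I expect this to be the main obstacle, and I would handle it by exploiting the per-level rigidity furnished by the previous step: since each $T^{r_n}$ already admits $2^{|T^{r_n}|}$ rooted-asymmetrizing colourings and any stabilising automorphism is forced to be the identity on each $T^{r_n}$, the equivalences among assembled sets are confined to the finitely-branching permutations captured by \eqref{eq:1} at each level, so the resulting number of equivalence classes should still telescope to $2^{|T|}$.
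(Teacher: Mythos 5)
Your decomposition is, in substance, the paper's own: your downward subtrees $T^{r_n}$ are exactly the partial unions of the components of $T-E(R)$ along the ray, and both arguments reduce to Theorem~\ref{rayless} on these rayless pieces. (Your remark that one really needs a \emph{rooted} variant of Theorem~\ref{rayless} here -- the side trees may have small-displacement automorphisms that move the root, so the motion hypothesis of the unrooted theorem can fail for them -- is a point where you are actually more careful than the paper.) The real divergence is in the reassembly step: the paper invokes Polat's product formula \cite[Theorem 3.1]{po-1991}, namely $a(T,w_0)=\prod_{x\in V(R)}a(x)$, together with \cite[Corollary 3.2]{po-1991} to pass from the rooted to the unrooted count, whereas you attempt to rebuild this from scratch with compatible sequences. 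Note also that your stated use of Polat is inverted: you invoke him for mere asymmetrizability (which your construction would deliver anyway), while what his theorem is actually needed for is the count of \emph{inequivalent} sets -- precisely the part you do not supply.

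That is the genuine gap. The theorem asserts $a(T)=2^{|T|}$, i.e.\ the existence of $2^{|T|}$ pairwise inequivalent asymmetrizing sets; your compatible-sequence construction produces (modulo imprecise counting) $2^{|T|}$ asymmetrizing \emph{sets}, and your final paragraph, where inequivalence should be established, only expresses the expectation that the count ``should still telescope.'' This is not a bookkeeping matter. Since an automorphism carrying an asymmetrizing set to another is unique, every equivalence class of asymmetrizing sets has exactly $|\Aut(T)|$ elements, so the naive count gives
\begin{equation*}
(\text{number of classes})\cdot|\Aut(T)|=2^{|T|},
\end{equation*}
which pins down the number of classes only when $|\Aut(T)|<2^{|T|}$. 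But a one-ended tree satisfying all hypotheses can have $|\Aut(T)|=2^{|T|}$: attach to each vertex $r_n$ of a ray two copies of a rayless asymmetric tree $A_n$ of size $\aleph_0$, with $A_n\not\cong A_k$ for $n\neq k$; then the motion is $\aleph_0$, $|T|=\aleph_0$, and $\Aut(T)\cong\prod_n \mathbb{Z}_2$ has cardinality $2^{\aleph_0}=2^{|T|}$, so the quotient is a priori anywhere between $1$ and $2^{|T|}$. Ruling this out requires a structural argument across the ray -- exactly what Polat's Theorem 3.1 provides and your proposal lacks. Two smaller defects: your claim that every $\sigma\in\Aut(T)$ fixes a tail of the ray \emph{pointwise} does not follow just from ``two rays to $\omega$ eventually coincide'' (that gives coincidence up to a shift; you must also rule out translations, e.g.\ because a translation orbit would be a double ray), and the number of compatible sequences is not literally $\prod_n 2^{|T^{r_n}|}$, since each $S_{n+1}$ is constrained by $S_n$, although cardinal arithmetic happens to rescue the value $2^{|T|}$.
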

\begin{proof}
 Let $T$ satisfy the assumptions of the theorem. Then it contains a ray
 $R$. 
{\f For a vertex $x$ of $R$ we denote by $T^x$ the component of $T - E(R)$ which contains $x$. We consider $T^x$ as a rooted tree with root $x$ and set $a(x) = a(T^x, x)$.
Note that $T^x$ is necessarily rayless, and thus $a(x) = 2^{|T^x|}$ by Theorem \ref{rayless}.}
Combining this observation with \cite[Theorem 3.1]{po-1991}, we get
\[
    a(T,w_0) = \prod_{\w x\in V(R)}a(x) = \prod_{\w x\in V(R)}2^{|T^x|} = 2^{\sum_{\w x\in V(R)}{|T^x|}} = 2^{|T|}.
\]
If $w_0$ is {\f fixed by every element} {\color{black} of $\Aut (T)$}, then $a(T) = a(T, w_0)$. If not, then $a(T) = a(T, w_0)$ by \cite[Corollary 3.2]{po-1991}.
\end{proof}

\subsection{Trees with double rays}

{\f Finally, we consider the case where $T$ is} a tree containing double rays. {\f For such a tree~$T$}, we let $T_*$ be the tree induced by all vertices that lie on some double ray. For a vertex $x$ of $T_*$ we denote by $T^x$ the component of $T - E(T_*)$ which contains $x$. We consider $T^x$ as a rooted tree with root $x$ and set $a(x) = a(T^x, x)$.
{\f As above, note that $T^x$ is necessarily rayless, and thus $a(x) = 2^{|T^x|}$ by Theorem \ref{rayless}.}

Pick an arbitrary root $w$ of $T_*$ and define the concepts of parents, siblings, twins and $\tau(x)$ as in Section \ref{sec:intro}, {\f in particular recall that $\tau(x)$ is the number of twins of $x$}.
The following theorem is the equivalence between conditions (ii), (iv) and (v) of Theorem 3.5 in \cite{po-1991}.

\begin{theorem}
    \label{thm:polat}
    A tree $T$ which contains double rays is asymmetrizable if and only if $\tau(x) \leq \prod_{y \geq x} a (y)$ for every vertex $x$ of $T_*$. Moreover, in this case $a(T)=\prod_{x \in T_*} a (x)$.
\end{theorem}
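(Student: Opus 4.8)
Theorem~\ref{thm:polat} is stated as a citation from \cite[Theorem~3.5]{po-1991}, so strictly speaking no proof is required; nonetheless I sketch how both assertions can be recovered from the product formula~(\ref{eq:1}) and the cardinal arithmetic of Lemma~\ref{le:1}. For a vertex $x$ of $T_*$ let $T_{\ge x}$ denote the subtree of $T$ rooted at $x$, that is, the subtree spanned by all $u$ with $u\ge x$, and put $b(x)=a(T_{\ge x},x)$. The rayless pieces $T^{y}$ with $y\ge x$ partition $V(T_{\ge x})$ and satisfy $a(y)=2^{|T^{y}|}$, so the trivial bound $a(G)\le 2^{|V(G)|}$ gives
\[
    b(x)\ \le\ 2^{|T_{\ge x}|}\ =\ 2^{\sum_{y\ge x}|T^{y}|}\ =\ \prod_{y\ge x}2^{|T^{y}|}\ =\ \prod_{y\ge x}a(y).
\]
In particular $a(T,w)=b(w)\le\prod_{x\in T_*}a(x)=2^{|T|}$, and the substance of the theorem is the matching lower bound together with the description of exactly when it is attained.

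For the necessity of the condition I would argue by contraposition. Suppose $\tau(x)>\prod_{y\ge x}a(y)$ for some $x$, and let $S\subseteq V(T)$ be arbitrary. The twins $x=x_1,\dots$ of $x$ carry pairwise isomorphic rooted subtrees $T_{\ge x_i}$, and any permutation of these that matches the traces $S\cap V(T_{\ge x_i})$ under a rooted isomorphism extends to a non-identity automorphism of $T$ fixing $S$ setwise. But there are at most $2^{|T_{\ge x}|}=\prod_{y\ge x}a(y)$ isomorphism types of subset on a single $T_{\ge x_i}$, hence fewer than $\tau(x)$ of them, so two twins must receive equivalent traces. Thus no $S$ is asymmetrizing.

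For the converse, assume $\tau(x)\le\prod_{y\ge x}a(y)$ for every $x$. Grouping the $T_*$-children of $x$ into similarity classes $R_x$, formula~(\ref{eq:1}) reads
\[
    b(x)=a(x)\prod_{x'\in R_x}\binom{b(x')}{\tau(x')},
\]
and the hypothesis makes each factor $\binom{b(x')}{\tau(x')}$ nonzero and, by the infinite binomial convention, equal to $b(x')^{\tau(x')}$. Feeding in the candidate value $b(x')=\prod_{y\ge x'}a(y)$ and collapsing over similarity classes exactly as in Lemma~\ref{le:1} turns the right-hand side into $a(x)\prod_{y>x}a(y)=\prod_{y\ge x}a(y)$, so this candidate is consistent with the recursion at every vertex; setting $x=w$ would then give $a(T,w)=\prod_{x\in T_*}a(x)$, matching the upper bound.

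The two places where genuine care is needed are the following. First, $T_*$ is leafless --- every vertex of $T_*$ lies on a double ray and hence has a $T_*$-child --- so the recursion for $b(x)$ has no base case and cannot be solved by a bottom-up induction on depth; the consistency check above verifies only that $\prod_{y\ge x}a(y)$ is a fixed point, not that it is the true value. The plan is to close this gap by realising the lower bound directly: choose pairwise inequivalent asymmetrizing traces independently on the disjoint subtrees $T^{y}$ --- one choice per similarity class, which the hypothesis guarantees exists --- and assemble them with the axiom of choice rather than by induction. Second, a tree containing double rays may admit automorphisms fixing no vertex, namely translations along a double ray, so $a(T)$ need not coincide with $a(T,w)$; passing from the rooted count to $a(T)=\prod_{x\in T_*}a(x)$ requires controlling $\Aut(T)$ relative to $\Aut(T,w)$, which is precisely the equivalence of conditions (ii), (iv) and (v) of \cite[Theorem~3.5]{po-1991} that the statement packages. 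I expect this second point --- ensuring the constructed sets are rigid under all of $\Aut(T)$ and not merely under the root stabiliser --- to be the main obstacle.
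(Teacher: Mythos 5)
Your opening sentence matches the paper exactly: Theorem \ref{thm:polat} is not proved in the paper but quoted as the equivalence of conditions (ii), (iv) and (v) of \cite[Theorem 3.5]{po-1991}, so treating it as a citation is the same approach, and on that level the proposal is fine.

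The supplementary sketch, however, should not be mistaken for a proof, and it contains one flaw beyond the two gaps you flag yourself. Throughout --- in the upper bound $b(x)\le\prod_{y\ge x}a(y)$, in the pigeonhole step of the necessity argument, and in the identification with $2^{|T|}$ --- you use $a(y)=2^{|T^y|}$ for the rayless pieces. Theorem \ref{thm:polat} carries no motion or degree hypotheses: it is a statement about an arbitrary tree containing double rays, and $a(y)=2^{|T^y|}$ is precisely what Theorem \ref{rayless} supplies under the hypotheses of Theorem \ref{thm:main}. In the paper's architecture that identity is fed \emph{into} Polat's theorem afterwards, inside the proof of Theorem \ref{thm:main}; importing it into a proof of Theorem \ref{thm:polat} silently restricts the statement to that special case. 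In general the equality $2^{|T_{\ge x}|}=\prod_{y\ge x}a(y)$ is simply false: a rayless piece can fail to be asymmetrizable as a rooted tree (for instance if $y$ carries more than two pendant leaves inside $T^y$), so some $a(y)$ may equal $0$ or be far smaller than $2^{|T^y|}$. With only the true bound $2^{|T_{\ge x}|}$ available, your pigeonhole needs $\tau(x)>2^{|T_{\ge x}|}$, which the hypothesis $\tau(x)>\prod_{y\ge x}a(y)$ does not give, so the necessity argument collapses. The correct general route is a trace argument: the restriction of any asymmetrizing set of $(T_{\ge x},x)$ to a piece $T^y$ must itself be asymmetrizing for $(T^y,y)$ (a root-fixing automorphism of a piece extends by the identity), and sets with componentwise equivalent traces are equivalent, which yields $b(x)\le\prod_{y\ge x}a(y)$ without any motion assumption. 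Combined with the gaps you do acknowledge --- the recursion for $b(x)$ has no base case since $T_*$ is leafless, and $a(T)$ versus $a(T,w)$ must be controlled against translations along double rays --- the sketch in the end reduces Polat's theorem to itself; that is acceptable only because, like the paper, you are entitled to use it as a black box.
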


We use it to prove {\f our main result for trees containing double rays}.

\begin{theorem}\label{thm:main}
   {\s Let $T$ be a tree} of infinite motion $m$, and {\f assume that} $\Delta(T)\le 2^m$. {\s If $T$ contains} a double ray, then $a(T)=2^{|T|}$.
\end{theorem}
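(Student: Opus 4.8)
The plan is to apply Theorem~\ref{thm:polat} directly, since both its hypothesis and its conclusion are expressed through the quantities $a(x)=2^{|T^x|}$, which are already under control: each $T^x$ is rayless, so $a(x)=2^{|T^x|}$ by Theorem~\ref{rayless}. Concretely, I would first verify the asymmetrizability criterion $\tau(x)\le \prod_{y\ge x}a(y)$ for every $x\in V(T_*)$, which yields that $T$ is asymmetrizable, and then evaluate the product formula $a(T)=\prod_{x\in T_*}a(x)$ to obtain $2^{|T|}$.

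For the criterion I would first rewrite the product. Writing $\widehat T^{x}$ for the branch of $T$ at $x$, that is, the component of $T$ minus the edge joining $x$ to its parent which contains $x$, the rayless pieces $T^{y}$ with $y\ge x$ partition $V(\widehat T^{x})$, so by cardinal arithmetic
\[
  \prod_{y\ge x}a(y)=\prod_{y\ge x}2^{|T^{y}|}=2^{\sum_{y\ge x}|T^{y}|}=2^{|\widehat T^{x}|}.
\]
The case $\tau(x)=1$ is then trivial. When $\tau(x)\ge 2$ there is a twin $x'\neq x$, so $\widehat T^{x}$ and $\widehat T^{x'}$ are isomorphic as rooted trees; exchanging these two branches by such an isomorphism and fixing everything else is a non-identity automorphism of $T$ moving exactly $2\,|\widehat T^{x}|$ vertices. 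The motion hypothesis forces $2\,|\widehat T^{x}|\ge m$, hence $|\widehat T^{x}|\ge m$, and therefore
\[
  \prod_{y\ge x}a(y)=2^{|\widehat T^{x}|}\ge 2^{m}\ge \Delta(T)\ge \tau(x),
\]
using $\tau(x)\le \Delta(T)\le 2^{m}$. Thus the criterion of Theorem~\ref{thm:polat} holds for every $x\in V(T_*)$ and $T$ is asymmetrizable. I expect this to be the crux: the key observation is that a single pair of distinct twins already produces, via the swap automorphism and the infinite motion, a branch of size at least $m$, which is exactly what is needed to dominate $\tau(x)$.

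Finally I would evaluate the count. The branches $T^{x}$ for $x\in V(T_*)$ partition $V(T)$, so by the same cardinal arithmetic
\[
  a(T)=\prod_{x\in T_*}a(x)=\prod_{x\in T_*}2^{|T^{x}|}=2^{\sum_{x\in T_*}|T^{x}|}=2^{|T|},
\]
where the last equality uses $|T|\ge\aleph_0$, which holds because $T$ contains a double ray. This establishes $a(T)=2^{|T|}$ and completes the argument; the second and third displays are essentially routine cardinal computations of the same type already carried out in Lemma~\ref{le:1} and in the one-ended case.
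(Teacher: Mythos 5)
Your proof is correct and follows essentially the same route as the paper's: invoke Theorem \ref{thm:polat}, get $a(y)=2^{|T^y|}$ for the rayless pieces from Theorem \ref{rayless}, use the branch-swapping automorphism together with the motion bound to force $|\bigcup_{y\ge x}T^y|\ge m$ whenever $\tau(x)\ge 2$, and finish with the cardinal identity $\prod_{y}2^{|T^y|}=2^{\sum_y |T^y|}$. The only difference is cosmetic: you merge the paper's two sub-cases (some $|T^{y_0}|\ge m$ versus all $|T^y|<m$) into a single uniform computation, which is legitimate since $a(y)=2^{|T^y|}$ holds for every $y\in T_*$ anyway.
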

\begin{proof}
    Pick an arbitrary root in $T_*$. By Theorem \ref{thm:polat} above, it suffices to show that $\tau(x) \leq \prod_{y \geq x} a (y)$ for every vertex $x$ of $T_*$. 
    Note that if $x$ and $x'$ are twins, then there is an automorphism of $T$ which swaps $x$ and $x'$ and only moves vertices in
    \[\bigcup_{y\geq x} T^y \cup \bigcup_{y\geq x'} T^y.\]
    If $|\bigcup_{y\geq x} T^y| < m$, then such an automorphism would move fewer than $m$ vertices, hence in this case $\tau(x) = 1$ which is less or equal than $\prod_{y \geq x} a (y),$ {\f {\w because}  all factors in the product are non-zero}.

    Hence we may assume that $|\bigcup_{y\geq x} T^y| \geq m$. Note that $\tau(x) \leq \Delta(T) \leq 2^m$.
    If there is some $y_0 \geq x$ such that $|T^{y_0}| \geq m$, then
    \[
        \tau(x) \leq 2^m \leq a(y_0) \leq \prod_{y \geq x} a (y).
    \]
    If there is no such $y$, then $a(y) = 2^{|T^y|}$ for every $y \geq x$, and thus
    \[
        \tau(x) \leq 2^m \leq 2^{|\bigcup_{y\geq x} T^y|} = 2^{\sum_{y\geq x} |T^y|} = \prod_{y \geq x} 2^{|T^y|} =  \prod_{y \geq x} a(y).
    \]
    {\f By the second part of} Theorem \ref{rayless} we conclude that
    \[
        a(T)=\prod_{x \in T_*} a (x) = \prod_{x \in T_*} 2^{|T^x|} = 2^{\sum _{x \in T_*} |T^x|} = 2^{|T|}. \qedhere
    \]
\end{proof}



\section{Tree-like graphs}
\label{sec:treelike}
We now apply our results to \emph{tree-like graphs}. In \cite{imkltr-07}, they  are defined as rooted graphs $(G,w)$, in which each vertex $y$ has a neighbor $x$ such that $y$ is on all shortest $x, w$-paths. By \cite[Theorem 4.2]{imkltr-07}
each tree-like graph $G$ with $\Delta\le 2^{\aleph_0}$ is asymmetrizable (as an unrooted graph). We present a proof of a   strengthened version  of this result.

In the terminology of the present paper we could also have defined tree-like graphs as rooted graphs $(G,w)$ in which each vertex has a child of which it is the only parent.
Note  that $w$ is the only vertex of $(G,w)$ that may have degree 1.

\begin{lemma}
Let $T$ be a  tree of infinite motion $m$ and  $\Delta(T)\le 2^{m}$. If each vertex of $T$ is on a double ray, then  $T$  has $2^{|T|}$ asymmetrizing sets $S$ in which each vertex of $S$ is adjacent to a vertex of $V(T)\setminus S$,  and to any $w\in V(T)$ there are $2^{|T|}$ asymmetrizing sets $S_w$ where $w$ is the only vertex of $S_w$ with no neighbor in $V(T)\setminus S_w$.
\end{lemma}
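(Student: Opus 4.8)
The plan is to first reduce the local boundary condition to a statement about independent sets, and then to produce the required $2^{|T|}$ sets by combining the product structure underlying Theorem~\ref{thm:main} with a coding argument. Throughout, call a vertex $v \in S$ \emph{interior} (for $S$) if all neighbours of $v$ lie in $S$; the boundary condition in the first assertion says that $S$ has no interior vertex, and the second says that $w$ is the unique interior vertex of $S_w$. As a setup, since every vertex of $T$ lies on a double ray we have $T = T_*$, every vertex has degree at least $2$, so $T$ has no leaves and $|T| = \Delta(T) \ge m$ is infinite. Each pendant subtree $T^x$ (for $x \in T_* = T$) is a single vertex, so $a(x) = 2$, Theorem~\ref{thm:main} gives $a(T) = 2^{|T|}$, and through Theorem~\ref{thm:polat} the asymmetrizing sets are obtained by prescribing, for each vertex, whether it belongs to $S$, subject only to distinguishing twins.

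For the first assertion I would fix a bipartition $V(T) = A \cup B$. If $S \subseteq A$, then $S$ is independent, so every $v \in S$ has all of its (at least two) neighbours in $B \subseteq V(T) \setminus S$; thus every such $S$ automatically satisfies the boundary condition and has no interior vertex. Hence it suffices to exhibit $2^{|T|}$ asymmetrizing sets contained in $A$. I would also record here that $\Aut(T)$ acts faithfully on $A$: in a leafless tree an automorphism fixing $A$ pointwise must fix every $b \in B$, since such a $b$ is the unique common neighbour of two of its (fixed) neighbours, two distinct vertices sharing two neighbours being impossible in a tree.

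The crux is producing $2^{|T|}$ asymmetrizing subsets of $A$. I would select $|T|$-many pairwise far-apart vertices $a_i \in A$ ($i \in I$, $|I| = |T|$) and attach to each a distinctive finite pattern, encoded by the membership in $S$ of vertices at distance two from $a_i$ inside $A$, chosen so that each $a_i$ is the unique vertex realising its pattern; spreading these patterns out so that fixing all the $a_i$ forces rigidity, every automorphism preserving such an $S$ is then the identity. The membership of each $a_i$ \emph{itself} is a further free bit that does not affect any identifying pattern, so these bits may be toggled independently, yielding $2^{|I|} = 2^{|T|}$ distinct asymmetrizing subsets of $A$, all boundary-respecting. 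This is the main obstacle, and a naive union bound is useless here: a single fixed-point-free involution on $A$ already admits $2^{|A|}$ invariant subsets, so one cannot simply estimate the non-asymmetrizing sets. What rescues the count is precisely that the pendant subtrees are singletons, so the only freedom is vertex membership, together with the motion hypothesis, which forbids the small ``corrections'' an automorphism would need in order to reabsorb a toggled bit.

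For the second assertion, given a prescribed $w$ I would force $w$ to be interior by putting its closed neighbourhood $N[w]$ into $S_w$, so that $w$ has no neighbour outside $S_w$. A short check using degree at least $2$ and acyclicity shows nothing else becomes interior: each neighbour of $w$ has a further neighbour which we keep out of $S_w$, and any $A$-vertex is adjacent to at most one vertex of $N(w)$ (else a cycle would appear), hence also retains a neighbour outside $S_w$. Away from $N[w]$ I would build $S_w$ exactly as in the first assertion, placing the $|T|$-many free toggles at independent positions not adjacent to the forced-in set, so that $w$ remains the unique interior vertex while asymmetry is preserved. As before this produces $2^{|T|}$ sets $S_w$, with the forced inclusion of $N[w]$ pinning $w$ and leaving the free toggles untouched.
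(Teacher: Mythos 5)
Your opening reduction---work inside one bipartition class $A$, and observe that $\Aut(T)$ acts faithfully on $A$ because in a leafless tree each $B$-vertex is the unique common neighbour of two of its (fixed) neighbours---is correct, and it is essentially the same first move as the paper's proof: there, the edges joining each even-level vertex to its children are contracted, producing a tree $T'$ whose vertex set is precisely the even bipartition class, and the restriction map $\Aut(T,w)\to\Aut(T',w)$ is shown to be injective, so that any subset of $A$ asymmetrizing $(T',w)$ asymmetrizes $(T,w)$ and is automatically boundary-respecting. The genuine gap is in your crux step: the construction of $2^{|T|}$ asymmetrizing subsets of $A$ from ``pairwise far-apart vertices carrying distinctive finite patterns plus free toggle bits.'' This cannot work as stated. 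The hard case of the lemma is a vertex with up to $2^m$ pairwise isomorphic (twin) pendant subtrees of size at least $m$; an asymmetrizing set must assign pairwise inequivalent traces to \emph{all} of these subtrees. There are only countably many finite patterns, so distance-two codes cannot separate $2^m$ twins (already $2^{\aleph_0}$ twins in the case $m=\aleph_0$). Moreover, pointwise fixing $|T|$ far-apart designated vertices does not force rigidity: two twin subtrees that avoid the designated vertices, or meet them in equivalent configurations, can still be swapped, and since your $a_i$ are required to be spread out they cannot hit every pair of twin subtrees. Your appeal to the motion hypothesis does not rescue this: an automorphism exchanging two twin subtrees of size $m$ moves at least $m$ vertices and is perfectly consistent with motion $m$, so motion never forbids such swaps; what forbids them is only the inequivalence of the traces of $S$, which your construction does not provide.

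The missing idea is that one should not build these sets by hand at all: the already-proved Theorem \ref{thm:mainmain} can be applied to an auxiliary tree living on $A$. In the paper, the contracted tree $T'$ again has motion $m$, satisfies $\Delta(T')\le 2^m$ and $|T'|=|T|$, so $a(T',w)=2^{|T|}$; each of these asymmetrizing sets is a subset of $A$, hence independent in $T$, hence has no interior vertex, and by injectivity of the restriction map it asymmetrizes $(T,w)$. All of the heavy combinatorial work (distinguishing $2^m$ twins via the binomial coefficients $\binom{a(x)}{\tau(x)}$) is thereby delegated to the machinery behind Theorem \ref{thm:mainmain}, rather than redone locally. The second family $S_w$ is then obtained by adding the children of $w$ to such a set $S'$ (with a small correction when some child $u$ of $w$ has all its children in $S'$), which is close in spirit to your $N[w]$ idea; but as written your second construction inherits the gap of the first. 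Two further small inaccuracies: $|T|=\Delta(T)$ holds only when $\Delta(T)$ is infinite, and Theorem \ref{thm:polat} is a counting statement, not a licence to choose membership ``subject only to distinguishing twins'' pairwise and locally---the inequivalence required is between whole marked subtrees, recursively.
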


\begin{proof}
Form a new tree $T'$ from $T$ by choosing an arbitrary vertex $w\in V(T)$ and by subsequently contracting all edges $ab$ to single vertices if $d_T(w,a)$ is even and $d_T(w,b) = d_T(w,a) +1$.

  $T'$ has motion $m$, $\Delta(T')\le 2^m$, and $|T'| = |T|$. By Theorem \ref{thm:mainmain} $a(T') = 2^{|T'|} = 2^{|T|}$. From $a(T',w)\ge a(T')$ and $a(T,w) \ge a(T)$ we also infer that $a(T',w) = a(T,w) = 2^{|T|}$.

Let $\alpha\in \Aut(T,w)$ and $\alpha'$  its restriction to $V(T')$. Then $\alpha'\in \Aut(T',w)$, and $\alpha$ is uniquely determined by its action on $(T',w)$, because each vertex of $(T',w)$ different from $w$ has only one parent. This implies that a set $S\subseteq V(T)$ asymmetrizes $(T,w)$ if  $S' = S\cap V(T')$ asymmetrizes $(T',w)$.

Given an asymmetrizing set $S'$ of $(T',w)$ we extend it in two ways to an asymmetrizing set $S$ of $(T,w)$. The first is to set
$S = S'$. Clearly this implies that each vertex of $S$ has a neighbor in $V(T)\setminus S$ and, because $a(T',w) = 2^{|T|}$, the number of these  set is $2^{|T|}$.

The second way is to form as set $S$ by adding all children of $w$ to $S'$,  unless there is a child $u$ of $w$ all of whose children are in $S'$, then we only add $u$. Note that in this case $w$ has a neighbor that is not in $S$, because $T$ has no vertices of degree 1. Clearly we obtain $2^{|T|}$ asymmetrizing sets in this way, and either $w$ or $u$ are the only vertices of $S$  all of whose neighbors are in $S$.
\end{proof}

\begin{theorem} \label{thm:aleph}
Let  $G$ be a tree-like graph  with $\Delta(G) \le 2^{\aleph_0}$. Then $a(G) =2^{|G|}$.
\end{theorem}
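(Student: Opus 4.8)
The plan is to reduce the tree-like graph $G$ to a suitable tree and then invoke Theorem~\ref{thm:mainmain}, using the structural description of tree-like graphs given in the excerpt. Recall that a tree-like graph $(G,w)$ is a rooted graph in which every vertex has a child of which it is the only parent. First I would extract from $(G,w)$ a spanning tree $T$ by retaining, for each vertex $y \neq w$, exactly one edge to a parent; the key feature to preserve is that each vertex has at least one private child (a child whose only parent is that vertex), so that the resulting tree inherits enough rigidity. The crucial point is that every automorphism of $(G,w)$ is determined by, and restricts to, an automorphism of this tree $T$, because the private-child structure forces the BFS layers and the parent relation to be respected.

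The main technical step is to verify that the tree $T$ satisfies the hypotheses of Theorem~\ref{thm:mainmain}, namely that $T$ has infinite motion and bounded degree. The degree bound $\Delta(T) \le \Delta(G) \le 2^{\aleph_0}$ is immediate since $T$ is a subgraph of $G$. For the motion, I would argue that $m(T) \ge \aleph_0$: since $G$ is tree-like it contains no finite automorphism obstruction of the right kind, and the point is to show that every non-identity automorphism of $T$ (equivalently of $(G,w)$) moves infinitely many vertices. Here the private-child property is what drives the argument, because moving any vertex forces its entire private subtree structure to move, propagating the displacement. With $m = \aleph_0$ and $\Delta(T) \le 2^{\aleph_0} = 2^m$, Theorem~\ref{thm:mainmain} yields $a(T) = 2^{|T|} = 2^{|G|}$.

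Finally I would transfer the count of asymmetrizing sets from the rooted tree back to $G$. Because every automorphism of $(G,w)$ is uniquely determined by its restriction to $T$, and conversely every automorphism of $T$ fixing $w$ extends to an automorphism of $G$, a set $S \subseteq V(G)$ asymmetrizes $(G,w)$ precisely when its intersection with $V(T)$ asymmetrizes $(T,w)$. This is exactly the style of correspondence established in the preceding lemma, where a tree is built from $T$ by contracting edges and automorphisms lift uniquely through the single-parent condition. Combining $a(G,w) = a(T,w) = 2^{|G|}$ with the inequality $a(G) \ge a(G,w)$ (restricting to root-fixing automorphisms can only merge equivalence classes) and the trivial bound $a(G) \le 2^{|G|}$ gives $a(G) = 2^{|G|}$.

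The step I expect to be the main obstacle is the verification that $T$ has infinite motion, since this requires translating the defining property of tree-like graphs (existence of a private child for each vertex) into a genuine lower bound on the number of vertices displaced by any non-trivial automorphism. One must rule out automorphisms that permute only finitely many vertices, and the argument has to use the tree-like structure rather than merely the tree structure of $T$; in particular, care is needed because an automorphism of $T$ might a priori move fewer vertices than the corresponding automorphism of $G$, so the correspondence between $\Aut(G,w)$ and $\Aut(T,w)$ must be set up carefully enough that the motion bound genuinely transfers.
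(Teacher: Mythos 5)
Your proposal has two genuine gaps, and the first is fatal to the whole strategy. The spanning tree $T$ you build is not canonical: for a vertex $x$ with two or more parents you must make an arbitrary choice of which parent edge to retain, and an automorphism of $(G,w)$ has no reason to respect that choice. Concretely, if $x$ has parents $a$ and $b$ and some $\alpha \in \Aut(G,w)$ swaps $a$ and $b$ while fixing $x$ (such tree-like graphs are easy to construct: give $w$ the two children $a,b$, let both be adjacent to $x$, and hang isomorphic chains of private children on $a$, $b$ and $x$), then $\alpha$ maps whichever edge you kept, say $xa$, to the edge $xb \notin E(T)$. So $\alpha$ does not restrict to an automorphism of $(T,w)$; and conversely an automorphism of $(T,w)$ need not preserve the deleted edges of $G$, so it need not come from $G$ either. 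Hence the correspondence between $\Aut(G,w)$ and $\Aut(T,w)$ on which your transfer of asymmetrizing sets rests simply does not exist, in either direction. The paper avoids exactly this trap by keeping only the canonical edges $yx$ in which $y$ lies on \emph{all} shortest $x,w$-paths; that subgraph is preserved by every $w$-fixing automorphism, but it is a spanning \emph{forest} $F$, not a tree, and this is where the real work lies: the (up to $2^{\aleph_0}$ many) components of $F$ can be permuted by automorphisms of $G$, so the paper applies Theorem \ref{thm:mainmain} to each component and then chooses their asymmetrizing sets pairwise \emph{inequivalently}. None of this component bookkeeping appears in your proposal.

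The second gap is your closing inequality $a(G) \ge a(G,w)$: it is backwards. Every asymmetrizing set of $G$ is an asymmetrizing set of $(G,w)$, and sets inequivalent under $\Aut(G)$ are a fortiori inequivalent under the subgroup $\Aut(G,w)$, so the inequality that comes for free is $a(G,w) \ge a(G)$, which is useless here. Passing from rooted to unrooted asymmetrization is a real issue, because automorphisms of $G$ need not fix $w$ (only when $w$ has degree $1$ is it automatically fixed, being the unique possible vertex of degree $1$ in a tree-like graph). The paper solves this with the Lemma of Section \ref{sec:treelike}: it manufactures asymmetrizing sets $S$ in which $w$ is the \emph{only} vertex all of whose neighbours lie in $S$, so that $w$ is recognizable from $S$ and every $S$-preserving automorphism of $G$ must fix $w$; your proposal has no mechanism playing this role. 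Ironically, the step you single out as the main obstacle, the infinite motion of $T$, is the least problematic part: every private-child edge is forced into any spanning tree (a private child has a unique parent), so every vertex of $T$ has a child in $T$, and a non-identity automorphism of $T$ with finite support would have to map a deepest moved vertex $x$ onto a child of the fixed child of $x$, i.e.\ onto a fixed vertex, contradicting injectivity.
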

\begin{proof}
Let  $G$ be a tree-like graph $(G,w)$. Then the set of edges $yx$, where $y$ is on  all shortest $x,w$-paths, are the edges of a spanning subgraph, say $F$. Clearly $F$ is a forest with no finite components. Each component $U$ has motion $\aleph_0$, unless it is asymmetric. Because $\Delta(U)\le \Delta(G) \le 2^{\aleph_0}$ we infer by Theorem \ref{thm:mainmain} that $a(U) = 2^{|U|}\ge 2^{\aleph_0}$.

Note that, by the definition of $F$, any automorphism $\alpha$  of $G$ that fixes $w$ preserves the components of $F$. This is  the case when $w$ has degree 1.

We now asymmetrize the components of $F$ under the following restrictions. Let $U_w$ be the component of $F$ that contains $w$. If $w$ has degree 1 we admit any asymmetrizing set $S$ of $U_w$, otherwise we only admit asymmetrizing sets $S$ in which  there is only vertex that has no neighbors in $V(U_w)\setminus S$.
For all other components $U$ of $F$ we only admit asymmetrizing sets $S$ where each vertex has a neighbor in $V(U)\setminus S$.

Because $F$ has at most $2^{\aleph_0}$ components we can asymmetrize them pairwise inequivalently with admitted asymmetrizing sets $S_U$. That is, we asymmetrize by sets $S_U$ that obey the above restrictions, and if  $U,U'$ are isomorphic components of $F$, then there is no isomorphism from $U$ to $U'$ that maps $S_U$ into $S_{U'}$.

Let $S$ be the union of all $S_U$ in  such a selection. If $w$ has degree 1, then each $\alpha\in \Aut(G)$ fixes $w$. Otherwise each $\alpha\in \Aut(G)$ that preserves $S$ fixes $w$ because $w$ is the only vertex of $S$ that has no neighbors in $V(G)\setminus S$. Hence, in either case each $\alpha\in \Aut(G)$ that preserves $S$ also preserves $F$. As each $S_U$ asymmetrizes $U$, the set $S$ asymmetrizes $F$, and thus also $G$, because $F$ is a spanning subgraph of $G$.

Let $\alpha$ be the least ordinal of the same cardinality as the set of components of $F$.   By transfinite induction with respect to the well ordering of the components of $F$ induced by $\alpha$, it is easily seen that there are $2^{|G|}$  asymmetrizing sets $S$ of $F$.
\end{proof}

\begin{problem} Let $m$ be an uncountable cardinal and $G$ be a tree-like graph with $m(G)=m$ and $\Delta(G)\le 2^m$. Is $G$ asymmetrizable?
\end{problem}

\end{document}